\documentclass[UTF-8,reqno]{amsart}
\usepackage{enumerate}
\setlength{\topmargin}{-0.3cm}
\setlength{\textheight}{21.5truecm}
\usepackage{amssymb,url,color, booktabs}

\usepackage{mathrsfs}

\usepackage{fancyhdr}
\pagestyle{fancy}
\fancyhf{}
\fancyhead[RE]{Shanwei Ding and Guanghan Li}
\fancyhead[LO]{Flows expanded by support function and curvature function}
\fancyhead[LE,RO]{\thepage}

\usepackage[nobysame]{amsrefs}
\BibSpec{article}{%
+{}{\PrintAuthors} {author}
+{,}{ \textrm} {title}
+{.}{ \textit} {journal}
+{,}{ \textbf} {volume}
+{}{ \parenthesize} {date}
+{,}{ } {pages}
+{.}{ arXiv:} {eprint}
+{.}{} {transition}
}
\BibSpec{book}{%
+{}{\PrintAuthors} {author}
+{,}{ \textit} {title}
+{.}{ \textrm} {series} 
+{,}{ Vol.} {volume}
+{.}{ } {publisher}
+{,}{ } {date}
+{.}{} {transition}
}
\usepackage{color}
\usepackage[colorlinks=true]{hyperref}
\hypersetup{
    linkcolor=blue,          
    citecolor=red,        
    filecolor=blue,      
    urlcolor=cyan
}

\numberwithin{equation}{section}

\newcommand{\be}{\begin{eqnarray}}
\newcommand{\ee}{\end{eqnarray}}
\newcommand{\ce}{\begin{eqnarray*}}
\newcommand{\de}{\end{eqnarray*}}
\newtheorem{theorem}{Theorem}[section]
\newtheorem{lemma}[theorem]{Lemma}
\newtheorem{remark}[theorem]{Remark}
\newtheorem{definition}[theorem]{Definition}
\newtheorem{proposition}[theorem]{Proposition}
\newtheorem{Examples}[theorem]{Example}
\newtheorem{corollary}[theorem]{Corollary}
\newenvironment{proof of theorem 1.1}{{\it Proof of Theorem 1.1}.}{{\hfill $\square$\hskip - \parfillskip}}

\def\tiaojian{smooth, closed and uniformly convex }

\def\a{\alpha}

\def\p{\partial}

\def\g{\gamma}
\def\l{\lambda}

\def\[{{\Big[}}
\def\]{{\Big]}}
\def\<{{\langle}}
\def\>{{\rangle}}
\def\({{\Big(}}
\def\){{\Big)}}

\def\bx{{\mathbf{x}}}

\def\min{{\mathord{{\rm min}}}}

\def\={&\!\!=\!\!&}

\def\mR{{\mathbb R}}
\def\mS{{\mathbb S}}

\def\1{{\mathbf{1}}}

\def\geq{\geqslant}
\def\leq{\leqslant}
\def\ge{\geqslant}
\def\le{\leqslant}

\def\a{\alpha}

\def\p{\partial}

\def\g{\gamma}
\def\l{\lambda}

\def\[{{\Big[}}
\def\]{{\Big]}}
\def\<{{\langle}}
\def\>{{\rangle}}
\def\({{\Big(}}
\def\){{\Big)}}

\def\bx{{\mathbf{x}}}

\def\min{{\mathord{{\rm min}}}}

\def\={&\!\!=\!\!&}
\def\bt{\begin{theorem}}
\def\et{\end{theorem}}
\def\bl{\begin{lemma}}
\def\el{\end{lemma}}
\def\br{\begin{remark}}
\def\er{\end{remark}}
\def\bx{\begin{Examples}}
\def\ex{\end{Examples}}
\def\bd{\begin{definition}}
\def\ed{\end{definition}}
\def\bp{\begin{proposition}}
\def\ep{\end{proposition}}
\def\bc{\begin{corollary}}
\def\ec{\end{corollary}}

\def\geq{\geqslant}
\def\leq{\leqslant}
\def\ge{\geqslant}
\def\le{\leqslant}

 \def\R{\mathbb R}
 \def\R{\mathbb R}

\def\<{\langle} \def\>{\rangle}

\def\bpf{\begin{proof}}
\def\epf{\end{proof}}

\allowdisplaybreaks

\begin{document}
	
\title{A Class Of Curvature Flows Expanded By Support Function And Curvature Function}
\author{Shanwei Ding and Guanghan Li}

\thanks{{\it Keywords: expanding flow, asymptotic behaviour, support function, curvature function}}

\address{School of Mathematics and Statistics, Wuhan University, Wuhan 430072, China.
}

\begin{abstract}
In this paper, we consider an expanding flow of closed, smooth, uniformly convex hypersurface in Euclidean $\mathbb{R}^{n+1}$ with speed $u^\alpha f^\beta$ ($\alpha, \beta\in\mathbb{R}^1$), where $u$ is support function of the hypersurface, $f$ is a smooth, symmetric, homogenous of degree one, positive function of the principal curvature radii of the hypersurface. If $\alpha \le 0<\beta\le 1-\alpha$, we prove that the flow has a unique smooth and uniformly convex solution for all time, and converges smoothly after normalization, to a round sphere centered at the origin.
\end{abstract}

\maketitle
\setcounter{tocdepth}{2}
\tableofcontents

\section{Introduction}
Flows of convex hypersurfaces in $\mathbb{R}^{n+1}$ by a class of speed functions which are homogenous and symmetric in principal curvature have been extensively studied in the past four decades. Well-known examples include the mean curvature flow \cite{HG}, and the Gauss curvature flow \cite{BS,FWJ}. In \cite{HG} Huisken showed that the flow has a unique smooth solution and the hypersurface converges to a round sphere if the initial hypersurface is closed and convex. Later, a range of flows with the speed of homogenous of degree one in principal curvatures were established,  see \cite{B0,B1,CB1,CB2} and references therein.

For the problem on the existence of the prescribed polynomial of the principal curvature radii of the hypersurface, Urbas \cite{UJ}, Chow and Tsai \cite{CB3}, Gerhardt \cite{GC}, Xia \cite{XC} studied the convergence for the flow with the speed of $f(\lambda_1,...,\lambda_n)$, where $f$ is symmetric polynomial of the principal curvature radii $\lambda_i$ of the hypersurface. In \cite{UJ}, Urbas considered the case of expanding convex hypersurfaces, and he proved that a smooth solution of the equation $\frac{\p X}{\p t}=f\nu$ exists for all time and that $M_t$ becomes spherical as $t\to\infty$, where $f$ is homogenous of degree one and satisfies some standard conditions, and $\nu$ is the outer unit normal vector field to $M_t$. Flow with speed depending not only on the curvatures has recently begun to be considered. For example, flows that deform hypersurfaces by their curvature and support function were studied in \cite{IM,SWM}. As a natural extension, in this paper, we consider the expanding flows of the convex hypersurfaces at the speed of $u^\alpha f^\beta$ with $\alpha, \beta\in\mathbb{R}^1$, where $u$ is the support function, and $f$ is a smooth, symmetric, homogenous of degree one, positive function of the principal curvature radii of the hypersurface. When $f=\sigma_k$, the flow has been studied by Sheng and Yi in \cite{SWM}.

Let $M_0$ be a closed, smooth, uniformly convex hypersurface in $\mathbb{R}^{n+1}$ ($n\geq2$), and $M_0$ encloses the origin. In this paper, we study the following expanding flow
\begin{equation}
\begin{cases}
&\frac{\partial X}{\partial t}(x,t)=<X,\nu>^\alpha f^\beta(x,t) \nu(x,t),\\
&X(\cdot,0)=X_0,
\end{cases}
\end{equation}
where $f(x,t)$ is a suitable curvature function of the hypersurface $M_t$ parametrized by $X(\cdot,t):\mS^n\to \mR^{n+1}$, $\beta>0$ and $\nu(\cdot,t)$ is the outer unit normal vector field to $M_t$.

To formulate our results, we shall suppose that the curvature function $f$ can be expressed as $f(\cdot,t)=f (\lambda_1,...,\lambda_n)$, where $\lambda_1,...,\lambda_n$ are the principal radii of curvature of the hypersurface $M_t$, and $f\in C^\infty(\Gamma^+)$ is a positive, symmetric function on the positive cone $\Gamma^+$=\{($\lambda_1,...,\lambda_n)\in \mR^n:\lambda_i>0$ for all $i$\}. The function $f$ is assumed to satisfy the following conditions:
\begin{gather}
\text{$f$ is homogenous of degree one on }  \Gamma^+,\\
\frac{\partial f}{\partial\lambda_i}>0\text{ on }\Gamma^+.
\end{gather}

In this paper, we prove the following
\begin{theorem}
Let $M_0$ be a closed, smooth, uniformly convex hypersurface in $\R^{n+1}$, $n\geq2$, enclosing the origin. Assume $\alpha, \beta\in \mR^1$ satisfying $\alpha \le 0<\beta\le 1-\alpha$. If $f\in C^\infty(\Gamma^+)$ is a positive, symmetric function on the positive cone satisfying (1.2) and (1.3), and satisfies the following conditions:
\begin{equation*}
\begin{split}
(\romannumeral1) \;& \text{the function $f$ is inverse concave, i.e. the dual function $f_*$ defined by}\\ &f_*(\l_1,...\l_n)=\frac{1}{f(1/\l_1,...,1/\l_n)}\text{ is concave on } \Gamma^+;\\
(\romannumeral2)\; & \text{$f_*$ approaches zero on the boundary of $\Gamma^+$}.
\end{split}
\end{equation*}
Then the flow (1.1) has a unique smooth and uniformly convex solution $M_t$ for all time $t>0$. For each $t\in[0,\infty)$, $X(\cdot,t)$ is a prarmetrization of a smooth, closed, uniformly convex hypersurface $M_t$ in $\mR^{n+1}$ by $X(\cdot,t)$: $\mS^n\to \mR^{n+1}$. After a proper rescaling $X\to \phi^{-1}(t)X$, where
\begin{equation}
\begin{cases}
\phi(t)=e^{\gamma t} &\text{ if }\alpha=1-\beta,\\
\phi(t)=(1+(1-\beta-\alpha)\gamma t)^{\frac{1}{1-\beta-\alpha}} &\text{ if }\alpha\not=1-\beta,
\end{cases}
\end{equation}
and $$\gamma=f^\beta(1,...,1).$$The hypersurface $\widetilde M_t=\phi^{-1}M_t$ converges exponentially to a round sphere centered at the origin in the $C^\infty$-topology.
\end{theorem}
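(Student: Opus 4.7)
My plan is to rewrite the flow as a scalar parabolic PDE for the support function $u(\xi,t)$ on $\mathbb{S}^n\times[0,T)$. Since the principal radii of curvature $\lambda_1,\ldots,\lambda_n$ of $M_t$ are the eigenvalues of $W[u]:=\bar\nabla^2 u+uI$ (with $\bar\nabla$ the Levi-Civita connection of $\mathbb{S}^n$), and $\partial_t u=\langle\partial_t X,\nu\rangle=u^\alpha f^\beta$ in the Gauss parametrization, the flow is equivalent to
\begin{equation*}
\partial_t u=u^\alpha\bigl(f(W[u])\bigr)^\beta,\qquad u(\cdot,0)=u_0.
\end{equation*}
Short-time existence and uniqueness follow from standard parabolic theory in view of (1.3); the task then reduces to uniform \emph{a priori} estimates plus a convergence argument, which I will carry out through the rescaled function $\tilde u=u/\phi(t)$, chosen so that the normalization $\gamma=f^\beta(1,\ldots,1)$ makes constants on $\mathbb{S}^n$ stationary.

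\textbf{Step 1 ($C^0$ bound).} At a spatial maximum of $u$ one has $\bar\nabla^2 u\le 0$ and hence $\lambda_i\le u_{\max}$; homogeneity and monotonicity of $f$ give $f^\beta\le\gamma\,u_{\max}^\beta$, so $\tfrac{d}{dt}u_{\max}\le\gamma\,u_{\max}^{\alpha+\beta}$, matching the ODE solved by $\phi$. The dual argument at a spatial minimum gives the reverse inequality, so $u/\phi$ is pinched between positive constants.

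\textbf{Step 2 ($C^1$ bound).} From $u\le C$ one has $M_t\subset B_C(0)$, and the identity $|X|^2=u^2+|\bar\nabla u|^2$ then yields a uniform bound on $|\bar\nabla u|$.

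\textbf{Step 3 ($C^2$ bound).} This is the technical core. For the upper bound on $\lambda_i$ I would apply the maximum principle to an auxiliary function of the form $\lambda_{\max}u^{-\sigma}$ (or equivalently $u^\alpha f^\beta/u^{\alpha+\beta}$); the hypotheses $\alpha\le 0$ and $\beta\le 1-\alpha$ are used precisely to give the correct sign on the non-gradient terms in the resulting parabolic inequality. For the lower bound on $\lambda_i$ (equivalently an upper bound on the principal curvatures $\kappa_i$), I would apply the maximum principle to $\min_i\lambda_i$ or to $f_*(\kappa_1,\ldots,\kappa_n)$: inverse concavity of $f$ (condition (i)) makes the second-order terms dissipative, and condition (ii), $f_*\to 0$ on $\partial\Gamma^+$, rules out $\kappa_i\to\infty$.

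\textbf{Step 4 (Higher regularity and long-time existence).} With two-sided bounds on $\lambda_i$ the equation is uniformly parabolic. Inverse concavity of $f$ means that, expressed in the principal curvatures, the operator is concave, so Evans--Krylov yields uniform $C^{2,\alpha}$ bounds and Schauder bootstrapping gives $C^{k,\alpha}$ for all $k$. A standard continuation argument then extends the solution to $[0,\infty)$.

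\textbf{Step 5 (Convergence).} For the rescaled flow I would show exponential decay of a scale-invariant quantity such as $\int_{\mathbb{S}^n}(\tilde u-\overline{\tilde u})^2\,d\sigma$ or the oscillation $\tilde u_{\max}-\tilde u_{\min}$, using the uniform estimates from Step 3 and the Poincar\'e inequality on $\mathbb{S}^n$ to bound the gradient energy from below. Interpolation with the uniform $C^\infty$ bounds upgrades $L^2$-decay to exponential convergence in $C^\infty$, showing that $\widetilde M_t$ tends to a round sphere centered at the origin.

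\textbf{Main obstacle.} The delicate step is the lower bound on the principal radii in Step 3. Expanding flows do not a priori enjoy curvature pinching, so one must combine inverse concavity of $f$ with the supporting role of $u^\alpha$ (already controlled in Step 1, and with nonpositive exponent so that its derivative contributions in the maximum-principle computation have the correct sign) and the upper restriction $\beta\le 1-\alpha$. Balancing these three structural hypotheses is precisely what forces the parameter range in the theorem.
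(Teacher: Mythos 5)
Your skeleton (scalar equation for the support function, $C^0$--$C^2$ estimates, uniform parabolicity plus bootstrapping, then convergence of the rescaled solution) matches the paper's, and several sub-steps differ in ways that are fine or even cleaner: your $C^1$ bound via $|X|^2=u^2+|\nabla u|^2$ is more elementary than the paper's maximum principle applied to $\tfrac12|\nabla\log u|^2$ (though the paper needs that maximum-principle computation again in Section 4, where the strict inequality $\max_iG^{ii}-\sum_iG^{ii}\le -C_9$ yields exponential decay of $|\nabla u|/u$ in the borderline case $\alpha=1-\beta$). The lower bound on $\lambda_i$ via a maximum principle on $\min_i\lambda_i$ using inverse concavity is exactly the paper's Lemma 3.4 (it bounds $h^{11}=1/\lambda_{\min}$ from above using Urbas's inequality $(2F^{km}h^{nl}+F^{kl,mn})\nabla_1h_{kl}\nabla_1h_{mn}\ge 2F^{-1}F^{kl}F^{mn}\nabla_1h_{kl}\nabla_1h_{mn}$, and it is here that $\alpha\le0$ and $\beta\le1-\alpha$ fix the signs).

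Two parts of your plan would run into trouble as stated. First, the upper bound on $\lambda_{\max}$: a direct maximum principle on $\lambda_{\max}u^{-\sigma}$ needs control of the second derivatives $F^{kl,mn}$ with a favorable (concave) sign, and only \emph{inverse} concavity is assumed, so that computation does not close. The paper instead proves a two-sided bound on $F^{\beta}$ by applying the maximum principle to $Q=u^{\alpha-1}F^{\beta}$ (whose evolution is $\p_tQ=(\a+\beta-1)Q^2-(\a+\beta-1)\g Q+\dots$, with the right signs precisely when $\alpha+\beta\le1$), and then deduces the upper bound on $\lambda_i$ \emph{algebraically}: once $\lambda_i\ge 1/C$ and $F$ is bounded, condition (ii) ($f_*\to0$ on $\p\Gamma^+$) forbids any $\lambda_j\to\infty$, since that would force $f\to\infty$. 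Note you have the roles of (ii) reversed: it rules out $\lambda_i\to\infty$, not $\kappa_i\to\infty$; the latter is what the inverse-concavity maximum principle handles. Second, the convergence step: an $L^2$/Poincar\'e energy argument for $\int(\tilde u-\overline{\tilde u})^2$ is not straightforward for a fully nonlinear equation and you give no mechanism for the required differential inequality. The paper's argument is more concrete and splits into cases: for $\alpha<1-\beta$ it sandwiches $u$ between the explicit radial solutions $u_{1,2}=[1-(1-c^{-q})e^{q\g t}]^{-1/q}$, $q=\alpha+\beta-1<0$, which converge exponentially to $1$; for $\alpha=1-\beta$ it uses the exponential decay of $|\nabla u|/u$ noted above. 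You would need to supply one of these (or an equivalent spectral-gap argument) to complete Step 5.
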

The flow(1.1) can be described by a ODE of the support function if $\beta=0$. So we donot state that result in here.

We denote by $u=<X,\nu>$ the support function of $M_t$ at $X$. Let $\Phi=u^\a,G=f^\beta$, then we rewrite the equation (1.1) in the following form
$$\frac{\p X}{\p t}=u^\alpha f^\beta(x,t) \nu(x,t)=\Phi G\nu.$$

Let us make some remarks about our conditions. Condition (1.3) ensures that this equation is parabolic. Condition (1.3),  (\romannumeral1) and (\romannumeral2) are used in our proof. We believe that conditions (\romannumeral1) and (\romannumeral2) are superfluous but we have not been able to avoid these.

We give some examples of functions $f$ satisfying the required hypotheses. For any integer $m$ such that $1\leq m\leq n$, the $mth$ elementary symmetric function $\sigma_m$ is defined by $\sigma_m(\lambda_1,...,\lambda_n)=(\sum_{1\leq i_1<\cdot\cdot\cdot<i_m\leq n}\lambda_{i_1}\cdot\cdot\cdot\lambda_{i_m})^\frac{1}{m}$. Then $\sigma_m$ is smooth, positive, symmetric function  and homogenous of degree one on the positive cone. It is easily checked that (1.3) and Condition (\romannumeral2) of Theorem 1.1  holds for $\sigma_m$. $\sigma_m$ satisfies the condition (\romannumeral1) by \cite{HGC}.

The second example is $f=(\sum_{i=1}^n\l_i^k)^\frac{1}{k}$ for $k>0$. Then $f_*(\lambda_1,...,\lambda_n)=(\sum_{i=1}^n(\frac{1}{\l_i})^k)^{-\frac{1}{k}}$ and $f$ are smooth, positive, symmetric functions on the positive cone, and both are homogenous of degree one. It is easily checked that (1.3) holds for $f$ and Condition (\romannumeral1), (\romannumeral2) hold for $f_*$.

More examples can be constructed as follows:

If $f_1,\cdots,f_k$ satisfy our conditions, then $f=\prod_{i=1}^kf_i^{\alpha_i}$ also satisfies our conditions, where $\alpha_i\ge 0$ and $\sum_{i=1}^k\alpha_i=1$.
 More example can be seen in \cite{B3,B4}.

The study of the asymptotic behavior of the flow (1.1) is equivalent to the long time behaviour of the normalised flow. Let $\widetilde X(\cdot,\tau)=\phi^{-1}(t)X(\cdot,t)$, where
\begin{equation}
\tau=\begin{cases}
t &\text{  if }\alpha=1-\beta,\\
\frac{log((1-\alpha-\beta)\gamma t+1)}{(1-\alpha-\beta)\gamma} &\text{  if }\alpha\not=1-\beta.
\end{cases}
\end{equation}
Then $\widetilde X(\cdot,\tau)$ satisfies the following normalized flow
\begin{equation}
\begin{cases}
\frac{\partial X}{\p t}(x,t)=u^\alpha f^\beta(x,t)\nu-\g X,\\
X(\cdot,0)=X_0.
\end{cases}
\end{equation}
For convenience we still use $t$ instead of $\tau$ to denote the time variable and omit the ``tilde'' if no confusions arise. In order to prove Theorem 1.1, we shall establish the a priori estimates for the normalized flow (1.6), and show that if $X(\cdot,t)$ solves (1.6), then $u$ converges exponentially to a constant as $t\to\infty$.

This paper is organized as follows. In Sect.2, we recall some properties of convex hypersurfaces and show that the flow (1.6) can be reduced to a parabolic equation of the support function. In Sect.3, we establish the a priori estimates, which ensure the long time existence of the normalized flow. Finally in Sect.4 we show that the flow (1.1) converges to the unit sphere.

\section{Preliminary}
We recall some basic notations at first. Let $M$ be a \tiaojian  hypersurface in $\mR^{n+1}$. Assume that $M$ is parametrized by the inverse Gauss map $X: \mS^n\to M\subset \mR^{n+1}$ and encloses origin. The support function $u: \mS^n\to \mR^1$ of $M$ is defined by $$u(x)=\sup_{y\in M}<x,y>.$$ The supremum is attained at a point $y=X(x)$ because of convexity, $x$ is the outer normal of $M$ at $y$. Hence $u(x)=<x,X(x)>$.

Let $e_1,\cdot\cdot\cdot,e_n$ be a smooth local orthonormal frame field on $\mS^n$, and $\nabla$ be the covariant derivative with respect to the standard metric $e_{ij}$ on $\mS^n$. Denote by $g_{ij}, g^{ij}, h_{ij}$ the induced metric, the inverse of the induced metric, and the second fundamental form of $M$, respectively. Then the second fundamental form of $M$ is given by
$$h_{ij}=\nabla_i\nabla_ju+ue_{ij},$$
and the proof can be seen in Urbas \cite{UJ}. We know $h_{ij}$ is symmetric and satisfies the Codazzi equation $$\nabla_ih_{jk}=\nabla_jh_{ik}.$$To compute the metric $g_{ij}$ of $M$ we use the Gauss-Weingarten relations $\nabla_ix=h_{ik}g^{kl}\nabla_lX$, from which we obtain$$e_{ij}=<\nabla_ix,\nabla_jx>=h_{ik}g^{kl}h_{jm}g^{ms}<\nabla_lx,\nabla_sx>=h_{ik}h_{jl}g^{kl}.$$
Since $M$ is uniformly convex, $h_{ij}$ is invertible and the inverse is denoted by $h^{ij}$, hence $g_{ij}=h_{ik}h_{jk}$. The principal radii of curvature are the eigenvalues of the matrix
\begin{equation}
b_{ij}=h^{ik}g_{jk}=h_{ij}=\nabla_{ij}^2u+u\delta_{ij}.
\end{equation}
Let $\varphi(\cdot,t): \mS^n\to \mS^n$ be the diffeomorphism such that the unit outer normal at $X(\varphi(x,t),t)$ is $x$. Then $u(x,t)=<X(\varphi(x,t),t),x>$. It follows that
\begin{equation}
\begin{split}
\frac{\p u}{\p t}=&<\frac{\p X}{\p \varphi^i}\frac{\p \varphi^i}{\p t}+\frac{\p X}{\p t},x>\\
=&<u^\alpha f^\beta x-\g X,x>\\
=&u^\alpha f^\beta-\g u,
\end{split}
\end{equation}
since $\frac{\p X}{\p \varphi_i}$ is tangential. We see therefore that the support function satisfies the initial value problem
\begin{equation}
\begin{cases}
&\frac{\p u}{\p t}=u^\alpha F^\beta([\nabla^2u+u\uppercase\expandafter{\romannumeral1}])-\g u\; \text{ on } \mS^n\times[0,\infty),\\
&u(\cdot,0)=u_0,
\end{cases}
\end{equation}
where $\uppercase\expandafter{\romannumeral1}$ is the identity matrix, $u_0$ is the support function of $M_0$, and
\begin{equation}
F([a_{ij}])=f(\mu_1,\cdots,\mu_n),
\end{equation}
where $\mu_1,\cdots,\mu_n$ are the eigenvalues of matrix $[a_{ij}]$. It is not difficult to see that the eigenvalues of $[F^{ij}]=[\frac{\p F}{\p a_{ij}}]$ are $\frac{\p f}{\p\mu_1},\cdots,\frac{\p f}{\p\mu_n}$. Thus from (1.3) we obtain
\begin{equation}
[F^{ij}]>0 \text{ on } \Gamma^+,
\end{equation}
which yields that the equation in (2.3) is parabolic for admissible solutions.

\section{A Priori Estimates}
In this section, we establish the priori estimates and show that the normalized flow exists for long time. We first show the $C^0$-estiamte of the solution to (2.3).

\begin{lemma}
Let $u(x,t)$, $t\in[0,T)$, be a smooth, uniformly convex solution to (2.3). If $\alpha\leq1-\beta$ and $\beta>0$, then there is a positive constant $C_1$ depending only on $\alpha,\beta$ and the lower and upper bounds of $u(\cdot,0)$ such that
\begin{equation*}
\frac{1}{C_1}\leq u(\cdot,t)\leq C_1.
\end{equation*}
\end{lemma}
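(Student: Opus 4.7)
The plan is to apply the parabolic maximum principle directly to the scalar equation $(2.3)$ for $u$. The key observation is that the driving term $u^{\alpha}F^{\beta}(\nabla^{2}u+uI)-\gamma u$ has a built-in ``attractor'' at $u\equiv 1$ whenever $\alpha+\beta\le 1$, so the condition $\beta\le 1-\alpha$ is precisely what makes the ODE comparison work on both sides.

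First I would set $u_{\max}(t):=\max_{x\in\mathbb{S}^{n}}u(x,t)$ and, at a spatial maximum $x_{0}$, use $\nabla u(x_{0},t)=0$ and $\nabla^{2}u(x_{0},t)\le 0$ to conclude that the matrix $W:=\nabla^{2}u+uI$ satisfies $W\le u(x_{0},t)I$ in the sense of symmetric matrices. Since $F^{ij}>0$ by $(2.5)$, $F$ is monotone in its matrix argument, and since $F$ is homogeneous of degree one,
\begin{equation*}
F(W)\le F\bigl(u(x_{0},t)I\bigr)=u(x_{0},t)\,f(1,\ldots,1)=u(x_{0},t)\,\gamma^{1/\beta}.
\end{equation*}
Raising to the $\beta$-th power (using $\beta>0$) and inserting into $(2.3)$ gives, in the viscosity/Hamilton sense,
\begin{equation*}
\frac{d}{dt}u_{\max}(t)\le \gamma\bigl(u_{\max}^{\alpha+\beta}-u_{\max}\bigr)=\gamma\,u_{\max}\bigl(u_{\max}^{\alpha+\beta-1}-1\bigr).
\end{equation*}
Since $\alpha+\beta-1\le 0$, whenever $u_{\max}(t)\ge 1$ the right-hand side is non-positive, so $u_{\max}$ cannot cross above $\max(u_{\max}(0),1)$. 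This yields the upper bound.

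The lower bound is the mirror image: at a spatial minimum $x_{1}$, one has $\nabla^{2}u(x_{1},t)\ge 0$, hence $W\ge u(x_{1},t)I$, and monotonicity together with homogeneity now gives $F^{\beta}(W)\ge u_{\min}^{\beta}\gamma$. Therefore
\begin{equation*}
\frac{d}{dt}u_{\min}(t)\ge \gamma\,u_{\min}\bigl(u_{\min}^{\alpha+\beta-1}-1\bigr),
\end{equation*}
and since $\alpha+\beta-1\le 0$, the right-hand side is non-negative whenever $0<u_{\min}(t)\le 1$. Thus $u_{\min}(t)\ge\min(u_{\min}(0),1)$. Combining the two estimates yields the constant $C_{1}$ depending only on $\alpha,\beta$ and $\min_{\mathbb{S}^{n}}u_{0},\max_{\mathbb{S}^{n}}u_{0}$.

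Essentially this proof is routine once the right algebraic identity is in place; the only mildly delicate point is the monotonicity step $F(W)\le F(u(x_{0},t)I)$, which must be justified by noting that $F^{ij}>0$ on $\Gamma^{+}$ implies monotonicity of $F$ with respect to the symmetric-matrix order (so one needs $W$ to remain inside the positive cone — which is the case, since the solution is assumed uniformly convex, i.e. $W>0$, on $[0,T)$). I do not expect any genuine obstacle here; the only thing to be careful about is the Lipschitz argument for $u_{\max},u_{\min}$ used to pass from pointwise inequalities at the extrema to ODE inequalities, which is standard (Hamilton's trick).
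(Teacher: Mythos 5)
Your proof is correct and follows essentially the same route as the paper: a maximum-principle/ODE comparison at the spatial extrema of $u$, using $\nabla^2 u \lessgtr 0$ there to bound $\nabla^2u+uI$ by $uI$, then monotonicity and degree-one homogeneity of $F$ to get $F^\beta \lessgtr \gamma u^\beta$, and finally the sign of $\alpha+\beta-1\le 0$ to trap $u$ between $\min\{1,u_{\min}(0)\}$ and $\max\{1,u_{\max}(0)\}$. The paper's proof is just a more terse version of the same argument (it writes out the minimum and says ``similarly'' for the maximum).
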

\begin{proof}
Let $u_{\min}(t)=\min_{x\in \mS^n}u(\cdot,t)=u(x_t,t)$. For fixed time $t$, at the point $x_t$, we have $$\nabla_iu=0 \text{ and } \nabla^2_{ij}u\geq0.$$
Note that $b_{ij}=u_{ij}+u\delta_{ij}\geq u\delta_{ij}$, we have $F^\beta(b_{ij})\geq\g u^\beta$, then
$$\frac{d}{dt}u_{\min}\geq\g u_{\min}(u_{\min}^{\alpha+\beta-1}-1).$$
Hence $u_{\min}\geq \min\{1,u_{\min}(0)\}$. Similarly, we have $u_{\max}\leq \max\{1,u_{\max}(0)\}$.
\end{proof}
\begin{lemma}
 Let $\alpha\leq1-\beta$ and $\beta>0$, and $X(\cdot,t)$ be the solution to the normalized flow (1.6) which encloses the origin for $t\in[0,T)$. Then there is a positive constant $C_2$ depending on the initial hypersurface and $\alpha,\beta$,  such that
$$\frac{1}{C_2}\leq F^\beta\leq C_2.$$
\end{lemma}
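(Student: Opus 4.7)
The strategy is to pick a carefully scaled auxiliary function whose evolution avoids the troublesome term involving $\sum_i F^{ii}$. Since Lemma 3.1 gives $1/C_1 \le u \le C_1$, bounding $F^\beta$ above and below is equivalent to bounding $V := u^{\alpha-1}F^\beta = F^\beta/u^{1-\alpha}$ away from $0$ and $\infty$. The choice is motivated by the identity $u_t = u^\alpha F^\beta - \gamma u = u(V-\gamma)$, which is the cleanest rewriting of (2.3) in terms of $V$.

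First I would derive the evolution equation of $V$ along (1.6). Starting from $\partial_t F = F^{ij}(\nabla_{ij}u_t + u_t\delta_{ij})$ and expanding
\[
\nabla_{ij}u_t = (V-\gamma)\nabla_{ij}u + u\nabla_{ij}V + \nabla_iu\,\nabla_jV + \nabla_ju\,\nabla_iV,
\]
together with Euler's identity $F^{ij}b_{ij}=F$ (hence $F^{ij}\nabla_{ij}u = F - u\sum F^{ii}$), one observes that the terms proportional to $\sum F^{ii}$ appear with opposite signs and cancel. The result is
\[
\partial_t V = \beta u V F^{-1}F^{ij}\nabla_{ij}V + 2\beta V F^{-1}F^{ij}\nabla_iu\,\nabla_jV + (\alpha+\beta-1)\,V(V-\gamma).
\]

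Next I would apply the maximum principle. At a spatial maximum of $V$, the gradient term vanishes and $F^{ij}\nabla_{ij}V\le 0$ by ellipticity (condition (1.3)), so
\[
\frac{d}{dt}V_{\max}(t) \le (\alpha+\beta-1)\,V_{\max}(V_{\max}-\gamma).
\]
The hypothesis $\alpha\le 1-\beta$ gives $\alpha+\beta-1\le 0$, hence the right-hand side is nonpositive as soon as $V_{\max}\ge \gamma$. This yields $V_{\max}(t)\le \max\{\gamma,\,V_{\max}(0)\}$. An analogous argument at the spatial minimum produces $V_{\min}(t)\ge \min\{\gamma,\,V_{\min}(0)\}>0$. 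Combining these bounds on $V$ with Lemma 3.1 gives the desired two-sided bound on $F^\beta$.

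The main obstacle is locating the right auxiliary function: if one tries the more obvious choices $Q=u^\alpha F^\beta$ or $P=u_t$, the evolution equation retains a zeroth-order term of the form $\beta u^\alpha F^{\beta-1}\sum F^{ii}$, which cannot be bounded at this stage without a priori control on the principal curvature radii. The specific exponent $\alpha-1$ in $V=u^{\alpha-1}F^\beta$ is dictated by the requirement $u_t/u = V-\gamma$, which is precisely what forces the $\sum F^{ii}$ contributions in $F^{ij}\nabla_{ij}u_t$ and $u_t\sum F^{ii}$ to cancel identically, reducing the problem to an elementary scalar ODE comparison in which the sign condition $\alpha+\beta\le 1$ is exactly what is needed.
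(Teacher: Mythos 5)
Your proposal is correct and follows essentially the same route as the paper: the paper also introduces the auxiliary quantity $Q=u^{\alpha-1}F^{\beta}$, derives the evolution equation $\p_tQ=(\a+\beta-1)Q^2-(\a+\beta-1)\g Q+u^{\alpha}G^{ij}Q_{ij}+2u^{\alpha-1}G^{ij}Q_iu_j$ (which coincides with yours after writing $G=F^\beta$), and concludes by the maximum principle using $\a+\beta-1\le 0$ together with Lemma 3.1.
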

\begin{proof}
Consider the auxiliary function
$$Q=u^{\alpha-1}F^\beta.$$
Then $Q=u^{\alpha-1}G$ and $G$ is homogenous of degree $\beta$. Since
$$(u^{\alpha}G)_{ij}=(Qu)_{ij}=Q_{ij}u+Q_iu_j+Q_ju_i+Qu_{ij},$$
we get
\begin{align}
\p_tQ&=\p_t(u^{\alpha-1}G)\notag\\
&=(\alpha-1)u^{\alpha-2}\frac{\p u}{\p t}G+u^{\alpha-1}G^{ij}\frac{\p h_{ij}}{\p t}\notag\\
&=(\alpha-1)Gu^{\alpha-2}(u^\alpha G-\g u)+u^{\alpha-1}G^{ij}[(u^\alpha G-\g u)_{ij}+(u^\alpha G-\g u)\delta_{ij}]\notag\\
&=(\alpha-1)Gu^{\alpha-2}(Qu-\g u)+u^{\alpha-1}G^{ij}(Q_{ij}u+Q_iu_j+Q_ju_i+Qh_{ij}-\g h_{ij})\notag\\
&=(\a+\beta-1)Q^2-(\a+\beta-1)\g Q+u^{\alpha}G^{ij}Q_{ij}+2u^{\alpha-1}G^{ij}Q_iu_j.\notag
\end{align}
If $\a+\beta-1\leq0$, the sign of the coefficient of the highest order term $Q^2$ is negative. The sign of the coefficient of the lower order term $Q$ is positive. Applying the maximum principle we know that $\frac{1}{C_3}\leq Q\leq C_3$, where $C_3$ is a positive constant depending on the initial hypersurface and $\alpha,\beta$. Then by Lemma 3.1, we have $$\frac{1}{C_2}\leq F^\beta\leq C_2.$$
\end{proof}
\begin{lemma}
Let $\alpha\leq1-\beta$ and $\beta>0$, and $X(\cdot,t)$ be the solution to the normalized flow (1.6) which encloses the origin for $t\in[0,T)$. Then there is a positive constant $C_4$ depending on the initial hypersurface and $\alpha,\beta$,  such that
$$\vert\nabla u\vert\leq C_4.$$
\end{lemma}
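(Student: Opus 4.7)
The plan is to deduce the $|\nabla u|$ bound directly from Lemma 3.1 together with uniform convexity, without invoking any parabolic maximum principle or evolution equation for $|\nabla u|^2$. The key geometric identity is that, since $M_t$ is uniformly convex and parameterized by the inverse Gauss map, the position vector decomposes on $\mS^n$ as
$$X(x,t)=u(x,t)\,x+\nabla u(x,t),$$
where $\nabla u$ is the spherical gradient viewed as a vector in $\mR^{n+1}$ tangent to $\mS^n$ at $x$. This yields the identity
$$|X(x,t)|^2=u^2(x,t)+|\nabla u(x,t)|^2,$$
so bounding $|\nabla u|$ is equivalent to bounding $|X|$ from above on $\mS^n$.

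First, I would fix $t\in[0,T)$, set $W(x,t)=\tfrac{1}{2}(u^2+|\nabla u|^2)$, and choose $x_0\in\mS^n$ to be a spatial maximum point of $W(\cdot,t)$. In a local orthonormal frame at $x_0$, using (2.1), one computes
$$0=\nabla_j W(x_0)=u\,u_j+u^k u_{kj}=u^k(u_{kj}+u\delta_{kj})=u^k b_{kj}.$$
Since $[b_{kj}]$ is positive definite by uniform convexity of $M_t$, this forces $u_k(x_0)=0$ for every $k$, so $|\nabla u|(x_0)=0$ and hence
$$\max_{\mS^n}W(\cdot,t)=\tfrac{1}{2}u^2(x_0,t)\leq \tfrac{1}{2}C_1^2$$
by Lemma 3.1. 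It follows that $|\nabla u|^2\leq C_1^2$ on $\mS^n\times[0,T)$, which is the desired estimate with $C_4=C_1$.

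The argument is entirely pointwise in $t$ and requires no parabolic machinery. The only step meriting justification is the decomposition $X=u\,x+\nabla u$ itself, which follows by differentiating $u(x,t)=\langle X(\varphi(x,t),t),x\rangle$ on $\mS^n$: since $\nabla_i X$ is tangent to $M_t$ and hence perpendicular to $x=\nu$, one gets $u_i=\langle X,e_i\rangle$ in the orthonormal frame, identifying the tangential component of $X$ with $\nabla u$. The two essential ingredients are thus the preservation of uniform convexity along the flow (which guarantees $[b_{kj}]>0$ at each $t$) and the $C^0$ estimate of Lemma 3.1; no substantial analytic obstacle is anticipated beyond these.
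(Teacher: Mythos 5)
Your proof is correct, but it takes a genuinely different route from the paper. The paper proves Lemma 3.3 parabolically: it sets $\omega=\log u$, computes the evolution of $Q=\tfrac12|\nabla\omega|^2$ at a spatial maximum, and uses the Ricci identity together with $\alpha+\beta-1\le 0$ and $[G^{ij}]>0$ to conclude $\partial_t Q_{\max}\le 0$, so that $|\nabla u|/u$ is controlled by its initial value. Your argument is static and purely geometric: from the inverse Gauss map decomposition $X=u\,x+\nabla u$ you get $|X|^2=u^2+|\nabla u|^2$, and at a spatial maximum of $W=\tfrac12(u^2+|\nabla u|^2)$ the condition $0=\nabla_jW=u^kb_{kj}$ together with positive definiteness of $[b_{kj}]$ forces $\nabla u=0$ there, so $|\nabla u|^2\le (\max_{\mS^n}u)^2\le C_1^2$ everywhere. (Equivalently, one can skip the critical-point computation entirely: convexity gives $|X|\le\max_{\mS^n}u$, hence $|\nabla u|^2=|X|^2-u^2\le C_1^2$.) Your approach is more elementary, uses only Lemma 3.1 plus convexity, and does not need the hypothesis $\alpha\le 1-\beta$ at all. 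What the paper's heavier computation buys is the inequality (3.1), which is reused in Section 4 with the curvature bounds of Lemma 3.4 to upgrade $\partial_tQ_{\max}\le 0$ to $\partial_tQ_{\max}\le -C_0Q_{\max}$, i.e.\ the exponential decay of $|\nabla u|/u$ that drives the convergence proof in the case $\alpha=1-\beta$. So your proof fully establishes Lemma 3.3 as stated, but it does not substitute for the paper's evolution computation in the overall argument.
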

\begin{proof}
Let $\omega=\log u$. Then we have
\begin{gather*}
\omega_i=\frac{u_i}{u},\\\omega_{ij}=\frac{u_{ij}}{u}-\frac{u_iu_j}{u^2},\\
h_{ij}=u_{ij}+u\delta_{ij}=e^\omega(\omega_{ij}+\omega_i\omega_j+\delta_{ij}).
\end{gather*}
It is easy to see that $\omega_{ij}$ is symmetric. Thus
$$\omega_t=\frac{u_t}{u}=(e^\omega)^{\a+\beta-1}F^\beta([\omega_{ij}+\omega_i\omega_j+\delta_{ij}])-\g.$$
Consider the auxiliary function $Q=\frac{1}{2}\vert\nabla\omega\vert^2$. At the point where $Q$ attains its spatial maximum, we have
\begin{gather*}
0=\nabla_iQ=\sum_{l}\omega_{li}\omega_{l},\\
0\geq\nabla_{ij}^2Q=\sum_{l}\omega_{li}\omega_{lj}+\sum_{l}\omega_{l}\omega_{lij},
\end{gather*}
and\begin{align*}
\p_tQ_{\max}&=\sum\omega_l\omega_{lt}\\
&=\omega_l((\a+\beta-1)\omega_l(e^\omega)^{\a+\beta-1}G+(e^\omega)^{\a+\beta-1}G^{ij}(\omega_{ijl}+\omega_{il}\omega_j+\omega_i\omega_{jl}))\\
&=2(\a+\beta-1)(e^\omega)^{\a+\beta-1}Q_{\max}G+(e^\omega)^{\a+\beta-1}G^{ij}\omega_l\omega_{ijl}.
\end{align*}
We remark that here $G^{ij}=G^{ij}([\omega_{ij}+\omega_i\omega_j+\delta_{ij}])$. By the Ricci identity,$$\nabla_l\omega_{ij}=\nabla_j\omega_{li}+\delta_{il}\omega_j-\delta_{ij}\omega_l,$$
we get
\begin{align}
\p_tQ_{\max}&=2(\a+\beta-1)(e^\omega)^{\a+\beta-1}Q_{\max}G
+(e^\omega)^{\a+\beta-1}G^{ij}(\omega_l\omega_{lij}+\omega_i\omega_j-\delta_{ij}\vert\nabla\omega\vert^2)\notag\\
&\leq2(\a+\beta-1)(e^\omega)^{\a+\beta-1}Q_{\max}G+2(e^\omega)^{\a+\beta-1}(\max_iG^{ii}-\sum_iG^{ii})Q_{\max}.
\end{align}
In terms of the positive definite of the symmetric matrix $[G^{ij}]$ and $\a+\beta-1\leq0$, we have $\p_tQ_{\max}\leq0$, thus
$$\max_{\mS^n}\frac{\vert\nabla u(\cdot,t)\vert}{u(\cdot,t)}\leq \max_{\mS^n}\frac{\vert\nabla u(\cdot,0)\vert}{u(\cdot,0)}.$$
Then it follows by Lemma 3.1 that we have $\vert\nabla u(\cdot,t)\vert\leq C_4$ for a positive constant $C_4$.
\end{proof}

The next step in our proof is the derivation of a curvature radii bound.

\begin{lemma}
Let $\alpha\leq0<\beta\leq1-\alpha$, and $X(\cdot,t)$ be a \tiaojian solution to the normalised flow (1.6) which encloses the origin for $t\in[0,T)$. Then there is a positive constant $C_5$ depending on the initial hypersurface and $\alpha,\beta$,  such that the principal curvature radii of $X(\cdot,t)$ are bounded from above and below $$\frac{1}{C_5}\leq\l_i(\cdot,t)\leq C_5.$$
\end{lemma}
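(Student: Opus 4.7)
The strategy is to combine parabolic maximum-principle arguments on auxiliary functions built from the eigenvalues $\lambda_i$ of the matrix $b_{ij}=\nabla^2_{ij}u+u\delta_{ij}$, using the bounds already obtained in Lemmas 3.1--3.3 together with the inverse concavity hypothesis (i) and the boundary behaviour (ii) of the dual function $f_*$.

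First I would derive the evolution equation of $b_{ij}$ by applying $\nabla^2_{ij}+\delta_{ij}$ to the normalised support-function equation (2.3). Writing $\Psi=u^\alpha F^\beta$ and commuting covariant derivatives via the Ricci identity on $\mS^n$, together with the product and chain rules applied to $\Psi$, this produces a parabolic equation of the schematic form
$$\p_t b_{ij}=\cL b_{ij}+\beta u^\alpha F^{\beta-1}F^{pq,rs}\nabla_i b_{pq}\nabla_j b_{rs}+T_{ij},$$
where $\cL=\beta u^\alpha F^{\beta-1}F^{pq}\nabla_p\nabla_q$ is the linearised operator and $T_{ij}$ collects lower-order contributions that, thanks to Lemmas 3.1--3.3, are bounded by a constant plus a term linear in $b$.

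For the upper bound I would apply the maximum principle to a test function of the form $W(x,t)=\log\lambda_{\max}(b(x,t))+A\psi(u,|\nabla u|^2)$ for a suitable $\psi$ and constant $A>0$. At a spatial maximum I would choose normal coordinates diagonalising $b$ so that $\lambda_{\max}=b_{11}$, and reduce to proving $\p_t\log b_{11}\le C-c\,b_{11}$. The central difficulty is the a priori sign-less term
$$\frac{\beta u^\alpha F^{\beta-1}}{b_{11}}F^{pq,rs}\nabla_1 b_{pq}\nabla_1 b_{rs}.$$
This is precisely where inverse concavity of $f$ enters: it yields an Andrews--Urbas-type inequality of the form
$$F^{pq,rs}\eta_{pq}\eta_{rs}\ge 2F^{-1}\bigl(F^{pq}\eta_{pq}\bigr)^2-2\sum_{p}\frac{F^{pp}}{b_{pp}}\eta_{1p}^2,$$
which, combined with the critical-point condition $\nabla_i W=0$, allows the offending term to be absorbed into manifestly negative pieces. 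The range $\alpha\le 0$ and $\alpha+\beta-1\le 0$ then gives the correct signs for the remaining zeroth-order contributions and closes the inequality, yielding $\lambda_{\max}\le C_5$.

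For the lower bound I would perform the parallel analysis on $\widetilde W(x,t)=\log\lambda_{\max}(b^{-1}(x,t))=-\log\lambda_{\min}(b(x,t))$, using the evolution of the Weingarten matrix $b^{ij}$ together with the concavity of $f_*$ (condition (i)), which plays the role that concavity of the speed would play in the standard maximum-principle argument. Condition (ii), that $f_*$ vanishes on $\p\Gamma^+$, together with the lower bound on $F^\beta$ from Lemma 3.2, prevents the smallest radius from degenerating and supplies the final sign needed to close the estimate. The main obstacle in both halves is the careful tracking of signs of the gradient and zeroth-order terms; the argument closes precisely in the range $\alpha\le 0<\beta\le 1-\alpha$, reflecting the fact that this is the natural range for the estimate rather than an artefact of the method.
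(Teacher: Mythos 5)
Your lower-bound half is, in substance, the paper's argument: the paper considers $W(x,t)=\max\{h^{ij}\zeta_i\zeta_j:\ e^{ij}\zeta_i\zeta_j=1\}$, i.e.\ the largest eigenvalue of the \emph{inverse} matrix, reduces at the maximum point to the evolution of $h^{11}$, and closes the estimate with the Urbas--Andrews inequality
$(2F^{km}h^{nl}+F^{kl,mn})\nabla_1h_{kl}\nabla_1h_{mn}\geq 2F^{-1}(F^{kl}\nabla_1h_{kl})^2$,
which is exactly what inverse concavity (condition (i)) supplies; the hypotheses $\alpha\leq 0<\beta$ give the right signs for the zeroth-order terms. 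So that half of your plan is sound and matches the paper.

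The gap is in your upper-bound half. In the evolution of $b_{11}=\lambda_{\max}$ the dangerous term is $+\beta u^{\alpha}F^{\beta-1}F^{pq,rs}\nabla_1b_{pq}\nabla_1b_{rs}$, and to conclude $\partial_t b_{11}\leq C-c\,b_{11}^2$ you need this term bounded from \emph{above}. The inequality you invoke is a \emph{lower} bound on $F^{pq,rs}\eta_{pq}\eta_{rs}$; it points the wrong way here. It is useful precisely for the inverse matrix, where the same quadratic expression enters with a minus sign through $h^{pq}_{kl,rs}=h^{pr}h^{ks}h^{ql}+h^{pk}h^{qr}h^{ls}$ --- which is why the paper runs the maximum principle only on $h^{11}$. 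Without concavity of $F$ (not assumed), your maximum principle for $\lambda_{\max}$ does not close. The paper obtains the upper bound on $\lambda_i$ without any second PDE argument: once $h^{11}\leq C_8$ gives $\lambda_i\geq 1/C_8$, Lemma 3.2 bounds $f$ above and below, hence $f_*(1/\lambda_1,\dots,1/\lambda_n)=1/f(\lambda)$ is bounded below by a positive constant, and condition (ii) ($f_*$ tends to zero on $\partial\Gamma^+$) forces $(1/\lambda_1,\dots,1/\lambda_n)$ to stay in a compact subset of $\Gamma^+$, i.e.\ $\lambda_i\leq C_5$. Note also that you assign condition (ii) to the lower bound; in the paper it is used only in this final compactness step for the \emph{upper} bound on the radii, while condition (i) is what drives the maximum principle.
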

\begin{proof}
First, we shall prove that $\l_i$  is bounded from below by a positive constant. The principal radii of curvatures of $M_t$ are the eigenvalues of $\{h_{il}e^{lj}\}$. To derive a positive lower bound of principal curvatures radii, it suffices to prove that the eigenvalues of $\{h^{il}e_{lj}\}$ are bounded from above. For this end, we consider the following quantity
$$W(x,t)=\max\{h^{ij}(x,t)\zeta_i\zeta_j: e^{ij}(x)\zeta_i\zeta_j=1\}.$$

Fix an arbitrary $T'\in(0,T)$ and assume that $W$ attains its maximum on $\mS^n\times[0,T']$ at $(x_0,t_0)$ with $t_0>0$ (otherwise $W$ is bounded by its initial value and we are done). We choose a local orthonormal frame $e_1,\cdots,e_n$ on $\mS^n$ such that at $X(x_0,t_0)$, $\{h_{ij}\}$ is diagonal, and assume without loss of generality that
  $$h^{11}\ge h^{22}\ge\cdots\ge h^{nn}.$$

 Take a vector $\xi=(1,0,\cdots,0)$ at $(x_0, t_0)$, and extend it to a parallel vector field in a neighborhood of $x_0$ independent of $t$, still denoted by $\xi$. Set
$$\widetilde W(x,t)=\frac{h^{kl}\xi_k\xi_l}{e^{kl}\xi_k\xi_l},$$
which is differential, and there holds
$$\widetilde W(x,t)\le \widetilde W(x_0,t_0)=W(x_0,t_0)=h^{11}(x_0,t_0).$$

Note that at the point $(x_0, t_0)$
\begin{gather*}
\p_t\widetilde W=\p_th^{11}, \; \nabla_i\widetilde W=\nabla_i h^{11},\; {\mbox{ and }}\; \nabla^2_{ij}\widetilde W=\nabla^2_{ij}h^{11}.
\end{gather*}
This implies that $\widetilde W$ satisfies the same evolution as $h^{11}$ at $(x_0, t_0)$. Therefore we shall just apply the maximum principle to the evolution of $h^{11}$ to obtain the upper bound.

We denote the partial derivatives $\frac{\p h^{pq}}{\p h_{kl}}$ and $\frac{\p^2 h^{pq}}{\p h_{rs}\p h_{kl}}$ by $h^{pq}_{kl}$ and $h^{pq}_{kl,rs}$ respectively. From \cite{UJ}, we have
\begin{gather*}
h^{pq}_{kl}=-h^{pk}h^{ql},\\
h^{pq}_{kl,rs}=h^{pr}h^{ks}h^{ql}+h^{pk}h^{qr}h^{ls},\\
\nabla_jh^{pq}=h^{pq}_{kl}\nabla_jh_{kl},\\
\nabla^2_{ij}h^{pq}=h^{pq}_{kl}\nabla^2_{ij}h_{kl}+h^{pq}_{kl,rs}\nabla_ih_{rs}\nabla_jh_{kl}.
\end{gather*}
Recall that $\Phi=u^\a$ and $G=F^\beta$, we then differentiate the equation (2.3) at $(x_t,t)$ to obtain
\begin{gather}
\frac{\p}{\p t}\nabla_ku=\Phi_kG+\Phi G_k-\g u_k,\\
\frac{\p}{\p t}\nabla^2_{kl}u=\Phi_{kl}G+\Phi_{k}G_l+\Phi_lG_k+\Phi G_{kl}-\g u_{kl}.
\end{gather}
Using (2.3) and (3.3), we see that $h_{kl}=\nabla^2_{kl}u+\delta_{kl}u$ satisfies the equation
$$\frac{\p}{\p t}h_{kl}=\Phi_{kl}G+\Phi_{k}G_l+\Phi_lG_k+\Phi G_{kl}+\Phi G\delta_{kl}-\g h_{kl},$$
thus
\begin{align}
\frac{\p}{\p t}h^{11}&=h^{11}_{kl}\frac{\p}{\p t}h_{kl}\notag\\
&=-(h^{11})^2(\Phi_{11}G+2\Phi_1G_1)-(h^{11})^2\Phi(G^{ij,mn}\nabla_1h_{ij}\nabla_1h_{mn}+G^{ij}\nabla_1\nabla_1h_{ij})\notag\\&\quad-\Phi G(h^{11})^2+\g h^{11}.
\end{align}
Note that
\begin{align}
\nabla_k\nabla_lh^{11}&=h^{11}_{mn}\nabla_k\nabla_lh_{mn}+h^{11}_{mn,rs}\nabla_lh_{mn}\nabla_kh_{rs}\notag\\
&=-(h^{11})^2\nabla_k\nabla_lh_{11}+h^{ms}(h^{11})^2\nabla_lh_{1m}\nabla_kh_{1s}+h^{ns}(h^{11})^2\nabla_lh_{1n}\nabla_kh_{1s}\notag\\
&=-(h^{11})^2\nabla_k\nabla_lh_{11}+2(h^{11})^2h^{rs}\nabla_1h_{lr}\nabla_1h_{ks}.
\end{align}
By the Ricci identity,
\begin{equation}
\nabla_k\nabla_lh_{11}=\nabla_1\nabla_1h_{kl}+\delta_{1k}h_{1l}-h_{kl}+\delta_{lk}h_{11}-\delta_{1l}h_{1k},
\end{equation}
and combination of (3.4), (3.5) and (3.6) gives
\begin{align*}
\frac{\p}{\p t}h^{11}=&\Phi G^{kl}\nabla_k\nabla_lh^{11}-\Phi(h^{11})^2(G+G^{kl}h_{kl})+\Phi\sum_iG^{ii}h^{11}+\g h^{11}\\&-\Phi(h^{11})^2(2G^{km}h^{nl}+G^{kl,mn})\nabla_1h_{kl}\nabla_1h_{mn}\\&-(h^{11})^2(2\nabla_1\Phi\nabla_1G+G\nabla_1\nabla_1\Phi).
\end{align*}
Since $F=G^\frac{1}{\beta}$ is homogenous of degree one, and satisfies the conditions (\romannumeral1) and (\romannumeral2) of Theorem 1.1, it follows from Urbas \cite{UJ} that,
\begin{gather}
F^{ij}h_{ij}=F,\\
(2F^{km}h^{nl}+F^{kl,mn})\nabla_1h_{kl}\nabla_1h_{mn}\geq2F^{-1}F^{kl}F^{mn}\nabla_1h_{kl}\nabla_1h_{mn}.
\end{gather}
We then have
\begin{align*}
\frac{\p}{\p t}h^{11}=&\beta\Phi F^{\beta-1}F^{kl}\nabla_k\nabla_lh^{11}-\Phi F^\beta(\beta+1)(h^{11})^2+\beta\Phi F^{\beta-1}\sum_iF^{ii}h^{11}+\g h^{11}\\
&-\Phi(h^{11})^2(2\beta F^{\beta-1}F^{km}h^{nl}+\beta(\beta-1)F^{\beta-2}F^{kl}F^{mn}\\&+\beta F^{\beta-1}F^{kl,mn})\nabla_1h_{kl}\nabla_1h_{mn}
-(h^{11})^2(2\beta F^{\beta-1}\nabla_1\Phi\nabla_1F+F^\beta\nabla_1\nabla_1\Phi)\\
\leq&\beta\Phi F^{\beta-1}F^{kl}\nabla_k\nabla_lh^{11}-(\beta+1)\Phi F^\beta(h^{11})^2+\beta\Phi F^{\beta-1}\sum_iF^{ii}h^{11}+\g h^{11}\\
&-\beta(\beta+1)\Phi F^{\beta-2}(h^{11})^2(\nabla_1F)^2-2\beta F^{\beta-1}\nabla_1\Phi\nabla_1F(h^{11})^2\\
&-F^\beta(\a(\a-1)u^{\a-2}(\nabla_1u)^2+\a u^{\a-1}(h_{11}-u))(h^{11})^2.
\end{align*}
Since $$-2\beta F^{\beta-1}\nabla_1\Phi\nabla_1F(h^{11})^2\leq\Phi\beta(\beta+1)F^\beta(\frac{\nabla_1F}{F})^2+\frac{\beta}{\beta+1}F^\beta
\frac{(\nabla_1\Phi)^2}{\Phi},$$
we have
\begin{align*}
\p_th^{11}\leq&\beta\Phi F^{\beta-1}F^{kl}\nabla_k\nabla_lh^{11}-(\beta+1)\Phi F^\beta(h^{11})^2+\beta\Phi F^{\beta-1}\sum_iF^{ii}h^{11}+\g h^{11}\\
&+\frac{\a(\beta+1-\a)}{\beta+1}F^\beta u^{\a-2}(\nabla_1u)^2(h^{11})^2-\a u^{\a-1}F^\beta h^{11}+\a u^\a F^\beta(h^{11})^2.
\end{align*}
Since $\sum_iF^{ii}\leq Fh^{11}$ and at $(x_0,t_0)$, $\nabla_ih^{11}=0,\nabla^2_{ij}h^{11}\leq0$, we have
\begin{align*}
\p_th^{11}\leq&-\Phi F^\beta(h^{11})^2+\g h^{11}+\frac{\a(\beta+1-\a)}{\beta+1}F^\beta u^{\a-2}(\nabla_1u)^2(h^{11})^2\\
&-\a u^{\a-1}F^\beta h^{11}+\a u^\a F^\beta(h^{11})^2.
\end{align*}
If $\a\leq0$ and $\beta>0$, we have
$$\p_th^{11}\leq-C_6(h^{11})^2+C_7h^{11}.$$
That is, $h^{11}\leq C_8$, where $C_8$ depends on the initial hypersurface, $\a$ and $\beta$. Thus $\frac{1}{\l_i}\leq C_8$ for $i=1,\cdots,n$. Since $f_*(\frac{1}{\l_1},\cdots,\frac{1}{\l_n})=\frac{1}{f(\l_1,\cdots,\l_n)}$ is uniformly continuous on $\overline{\Gamma}_{c}=\{\l\in\overline{\Gamma}^+\vert\l_i\geq C_{10}$ for all $i \}$, and $f_*$ is bounded from below by a positive constant. By Lemma 3.2, condition (\romannumeral1) and (\romannumeral2) of the Theorem 1.1 imply that $\l_i$ remains in a fixed compact subset of $\bar\Gamma^+$, which is independent of $t$. That is $$\frac{1}{C_5}\leq\l_i(\cdot,t)\leq C_5.$$
\end{proof}

The estimates obtained in Lemma 3.1, 3.3 and 3.4 depend on $\a$, $\beta$ and the geometry of the initial data $M_0$. They are independent of $T$. By Lemma 3.1, 3.3 and 3.4, we conclude that the equation (2.3) is uniformly parabolic. By the $C^0$ estimate (Lemma 3.1), the gradient estimate (Lemma 3.3), the $C^2$ estimate (Lemma 3.4), Cordes and Nirenberg type estimates \cite{B2,CO,LN} and the Krylov's theory \cite{KNV}, we get the H$\ddot{o}$lder continuity of $\nabla^2u$ and $u_t$. Then we can get higher order derivation estimates by the regularity theory of the uniformly parabolic equations. Hence we obtain the long time existence and $C^\infty$-smoothness of solutions for the normalized flow (1.6). The uniqueness of smooth solutions also follows from the parabolic theory. In summary, we have proved the following theorem.
\begin{theorem}
Let $M_0$ be a \tiaojian hypersurface in $\mR^{n+1}$, $n\geq2$, which encloses the origin. If $\alpha\leq0<\beta\leq1-\alpha$, the normalized flow (1.6) has a unique \tiaojian solution $M_t$ for all time $t\geq0$. Moreover, the suport function of $M_t$ satisfies the a priori estimates
$$\parallel u\parallel_{C^{k,\beta}(\mS^n\times[0,\infty))}\leq C,$$
where the constant $C>0$ depends only on $k,\a,\beta$ and the geometry of $M_0$.
\end{theorem}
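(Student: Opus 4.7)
The plan is to package the a priori estimates already established in Lemmas 3.1, 3.3, and 3.4 into the classical continuation scheme for fully nonlinear parabolic equations on compact manifolds. Since the short-time existence of a smooth, uniformly convex solution is standard (the linearisation of (2.3) at an admissible initial datum is strictly parabolic by (2.5)), the proof reduces to showing that the maximal existence time is $T=\infty$ and that the solution enjoys uniform $C^{k,\beta}$ bounds. The former follows once one has uniform $C^{2,\gamma}$ control, because any such bound prevents degeneration of the parabolicity at a finite-time singularity.

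First I would combine Lemma 3.1 (uniform two-sided bounds on $u$), Lemma 3.3 (uniform gradient bound $|\nabla u|\le C_4$) and Lemma 3.4 (uniform two-sided bounds on the principal radii $\lambda_i$) to conclude that the symmetric matrix $b_{ij}=\nabla^2_{ij}u+u\delta_{ij}$ stays in a fixed compact subset $K\Subset\Gamma^+$ for all $t\ge 0$. On $K$ the curvature function $F$ is smooth and $[F^{ij}]$ has eigenvalues in a fixed compact subset of $(0,\infty)$, while the factor $u^\alpha$ is pinched between positive constants by Lemma 3.1. Hence the operator $u^\alpha F^\beta(b)-\gamma u$ in (2.3) is uniformly elliptic in the spatial variable and the equation is uniformly parabolic in $(x,t)$, with $C^\infty$-smooth coefficients on the relevant range.

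Next I would invoke the Krylov--Safonov Hölder estimate for solutions of uniformly parabolic equations in non-divergence form to get $u\in C^{1,\gamma}$ in parabolic scaling, and then apply the Evans--Krylov theorem (or the Cordes--Nirenberg estimate cited in the text, which suffices in our setting because $b_{ij}$ already lies in a small set where the operator is close to constant coefficients after a linear change of variables) to upgrade to $\nabla^2 u,\,u_t\in C^{\gamma}$. At this stage the equation becomes a uniformly parabolic linear equation for the second derivatives with $C^{\gamma}$ coefficients, so standard Schauder bootstrapping yields uniform $C^{k,\beta}$ bounds for every $k\in\mathbb{N}$, with constants depending only on $k,\alpha,\beta$ and the initial data. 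Uniqueness of the smooth solution follows from the maximum principle applied to the difference of two solutions linearised against the concave function $F^\beta$ (concave after a suitable exponent change, using the inverse concavity condition (i) of Theorem 1.1).

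The only genuinely delicate point is the passage from the $C^2$ bound of Lemma 3.4 to $C^{2,\gamma}$, since the operator $(u,b)\mapsto u^\alpha F^\beta(b)$ is not obviously concave or convex in $b$ (it is $F^\beta$ with $\beta$ possibly larger than $1$ inside the allowed range, and is only inverse-concave). The standard Evans--Krylov argument is therefore not directly applicable; this is why the authors explicitly cite the Cordes--Nirenberg type estimates of \cite{B2,CO,LN}, which apply because Lemma 3.4 confines the eigenvalues to a region so small that the oscillation of $F^{ij}/\mathrm{tr}\,F^{ij}$ stays below the Cordes threshold $1/(n-1)$ after a bounded linear change of coordinates. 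Once this Hölder estimate on $\nabla^2 u$ is in hand the remainder of the argument is routine parabolic bootstrapping, so I expect this compactness-plus-Cordes step to be the real substance of the proof.
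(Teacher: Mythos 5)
Your proposal is correct and follows essentially the same route as the paper: the authors likewise combine the $C^0$, gradient, and curvature-radii bounds of Lemmas 3.1, 3.3 and 3.4 to conclude uniform parabolicity of (2.3), then invoke Cordes--Nirenberg type estimates and Krylov's theory for the H\"older continuity of $\nabla^2 u$ and $u_t$, followed by standard bootstrapping for higher regularity, long-time existence, and uniqueness. Your elaboration on why the non-concavity of $F^\beta$ forces the Cordes--Nirenberg route rather than Evans--Krylov is a reasonable gloss on the paper's (much terser) argument, not a departure from it.
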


\section{Proof Of Theorem 1.1}

In this section, we prove the asymptotical convergence of solutions to the normalized flow (1.6). By Theorem 3.5 it is known that the flow (1.6) exists for all time $t>0$ and remains smooth and uniformly convex, provided $M_0$ is smooth, uniformly convex and encloses the origin. In Sect.3, we have the bound of $\l_i$, so we infer that $$\max_iG^{ii}-\sum_iG^{ii}\leq-C_9.$$ In fact that $G^{ij}$ is smooth and defined in a compact set, thus the eigenvalues of $(G^{ij})$ have the lower bound. It then follows by (3.1) that $\p_tQ_{\max}\leq-C_0Q_{\max}$ for some positive constant $C_0$, where $Q=\frac{1}{2}\vert\frac{\nabla u}{u}\vert^2$. This proves
\begin{equation}
\max_{\mS^n}\frac{\vert\nabla u(\cdot,t)\vert}{u(\cdot,t)}\leq Ce^{-C_0t},\forall t>0,
\end{equation}
for both $C$ and $C_0$ are positive constants.

\begin{proof of theorem 1.1}

Case (\romannumeral1): $\a<1-\beta.$

Let $u(\cdot,t)$ be the solution to (2.3). By making a rescaling of $M_0$ if necessary, we may assume
$$a:=\min_{\mS^n}u(\cdot,0)\leq1\leq\max_{\mS^n}u(\cdot,0)=:b.$$
Let us introduce two time-dependent functions
\begin{gather*}
u_1=[1-(1-a^{-q})e^{q\g t}]^{-\frac{1}{q}},\\
u_2=[1-(1-b^{-q})e^{q\g t}]^{-\frac{1}{q}},
\end{gather*}
where $q=\a+\beta-1<0$. It is easy to check that both $u_1$ and $u_2$ satisfy Eq (2.3). By the comparison principle, $u_1(t)\leq u(\cdot,t)\leq u_2(t)$. Hence
$$(a^{-q}-1)e^{q\g t}\leq u^{-q}-1\leq(b^{-q}-1)e^{q\g t}.$$
Thus $u$ converges to $1$ exponentially. By the interpolation and the a priori estimates established in Sect.3, we see that $\parallel u(\cdot,t)-1\parallel_{C^k(\mS^n)}\to0$ exponentially. Hence $M_t$ converges to the unit sphere centered at the origin.

Case (\romannumeral2): $\a=1-\beta$.

By (4.1), we have that $\parallel\nabla u\parallel\to0$ exponentially as $t\to\infty$. Hence by the interpolation and the a priori estimates, we can get that $u$ converges exponentially to a constant in the $C^\infty$ topology as $t\to\infty$.

\end{proof of theorem 1.1}

\section{Reference}
\begin{biblist}

\bib{B0}{article}{
   author={Andrews B.},
   title={Contraction of convex hypersurfaces in Euclidean space},
   journal={Calc. Var. Partial Differential Equations},
   volume={2(2)}
   date={1994},
   pages={151-171},
}

\bib{B1}{article}{
   author={Andrews B.},
   title={Gauss curvature flow: the fate of the rolling stones},
   journal={Invent. Math.},
   volume={138(1)}
   date={1999},
   pages={151-161},
}

\bib{B3}{article}{
   author={Andrews B.},
   title={Pinching estimates and motion of hypersurfaces by curvature functions},
   journal={ J. Reine Angew.Math.},
   volume={608}
   date={2007},
   pages={17-33},
}

\bib{B2}{article}{
   author={Andrews B.},
   author={ McCoy James},
   title={Convex hypersurfaces with pinched principal curvatures and flow of convex hypersurfaces by high powers of curvature},
   journal={Trans. Amer. Math. Soc.},
   volume={364(7)}
   date={2012},
   pages={3427-3447},
}

\bib{B4}{article}{
   author={Andrews B.},
   author={ McCoy James},
   author={ Yu Zheng},
   title={Contracting convex hypersurfaces by curvature},
   journal={Calc. Var. PDEs },
   volume={47}
   date={2013},
   pages={611-665},
}

\bib{BS}{article}{
   author={Brendle S.},
   author={ Choi K.},
   author={ Daskalopoulos P.},
   title={Asymptotic behavior of flows by powers of the Gauss curvature},
   journal={Acta Math.},
   volume={219(1)}
   date={2017},
   pages={1-16},
   }

\bib{CB1}{article}{
   author={Chow B.},
   title={Deforming convex hypersurfaces by the $n$-th root of the Gaussian curvature},
   journal={J. Differential Geom.},
   volume={22(1)}
   date={1985},
   pages={117-138},
}
\bib{CB2}{article}{
   author={Chow B.},
   title={Deforming convex hypersurfaces by the square root of the scalar curvature},
   journal={Invent. Math.},
   volume={87(1)}
   date={1987},
   pages={63-82},
}
\bib{CB3}{article}{
  author={Chow B.},
  author={D. H. Tsai.},
     title= {Expansion of convex hypersurfaces by nonhomogeneous functions of curvature},
 journal={Asian J. Math.},
   volume={1},
     pages={769--784},
     date={1997},

}
\bib{FWJ}{article}{
  author={Firey W. J.},
     title= {Shapes of worn stones},
 journal={Mathematika},
   volume={21},
     pages={1-11},
     date={1974},

}
\bib{GC}{article}{
   author={Gerhardt C.},
   title={Non-scale-invariant inverse curvature flows in Euclidean space},
   journal={Cal. Var. PDEs},
   volume={49}
   date={2014},
   pages={471-489},
}

\bib{CO}{article}{
   author={Heinz Otto Cordes},
   title={$\ddot U$ber die erste Randwertaufgabe bei quasilinearen Differentialgleichungen zweiter Ordnung in mehr als zwei Variablen},
   journal={Math. Ann.},
   volume={131}
   date={1956},
   pages={278-312},
}

\bib{HG}{article}{
   author={Huisken G.},
   title={Flow by mean curvature of convex surfaces into sphere},
   journal={J. Differential Geom.},
   volume={20(1)}
   date={1984},
   pages={237-266},
}

\bib{HGC}{article}{
   author={Huisken G.},
   author={Carlo Sinestrari},
   title={Convexity estimates for mean curvature flow and singularities of mean convex surfaces},
   journal={Acta Math.},
   volume={183}
   date={1999},
   pages={45-70},
}

\bib{IM}{article}{
   author={Ivaki M.},
   title={Deforming a hypersurface by principal radii of curvature and support function},
   journal={Calc. Var. PDEs},
   volume={58(1)}
   date={2019},
}

\bib{KNV}{book}{
  author={Krylov N. V.},
     title= {Nonlinear elliptic and parabolic quations of the second order},
 publisher={D. Reidel Publishing Co., Dordrecht},
     date={1987. xiv+462pp},

}

\bib{LN}{book}{
  author={L. Nirenberg},
     title= {On a generalization of quasi-conformal mappings and its application to elliptic partial differential equations},
 publisher={Contributions to the theory of partial differential equations, Annals of Mathematics Studies},
     date={ Princeton University Press, Princeton, N. J.,1954, pp. 95¨C100.},
}

\bib{SWM}{article}{
   author={Sheng W. M.},
   author={Yi C. H.},
   title={A class of anisotropic expanding curvature flows},
   journal={Discrete and Continuous Dynamical Systems},
   volume={40(4)}
   date={2020},
   pages={2017-2035},
   }

\bib{UJ}{article}{
   author={Urbas J.},
   title={An expansion of convex hypersurfaces},
   journal={J. Differential Geom.},
   volume={33(1)}
   date={1991},
   pages={91-125},
}

\bib{XC}{article}{
   author={Xia C.},
   title={Inverse anisotropic curvature flow from convex hypersurfaces},
   journal={J. Geom. Anal.},
   volume={27(3)}
   date={2016},
   pages={1-24},
}

\end{biblist}

\end{document}